\newtheorem{theorem}{Theorem}[section]
\newtheorem{corollary}[theorem]{Corollary}
\newtheorem{lemma}[theorem]{Lemma}
\newtheorem{proposition}[theorem]{Proposition}
\newtheorem{rmk}[theorem]{Remark}
\newtheorem{remark}[theorem]{Remark}
\newtheorem{example}[theorem]{Example}
\def\r{\mathbb{R}}
\begin{document}

\title{Lower bound results for conditionally decomposable polytopes}

\author{Jie Wang \and David Yost}

\maketitle

\begin{abstract}
It is possible for a combinatorial type of polytope to have
both decomposable and indecomposable realizations; here decomposability is meant with respect to Minkowski addition. Such polytopes are called conditionally decomposable. We show that the minimum number of vertices of a conditionally decomposable $d$-polytope is in the range $[3d-3, 4d-4]$, and that for a polytope having a line segment for a summand, $4d-4$ is sharp. As an application, the exact lower bound of the number of $k$-faces of a decomposable $d$-polytope with $2d+m$ ($2 \le m\le d-4$) vertices is obtained.
Concerning the facets, in dimension 4, the minimum number of facets of a conditionally decomposable polytope is 9, and in dimension $d\ge 5$, the minimum is $d+4$. 

\end{abstract}

\section{Introduction}


A polytope is the convex hull of finite number of points in Euclidean space. Recall that a polytope $P$ is decomposable if it can be written as the Minkowski sum of two polytopes, i.e. $P=Q+R=\{q+r: q \in Q, r \in R \}$, where $Q, R$ are not homothetic to $P$, i.e. not of the form $kP+a$ for $k>0$, $a\in\r^d$. The set of all faces of a polytope is a  lattice with respect to inclusion. Two polytopes are called combinatorially equivalent if there is an isomorphism between their face lattices. For general information about polytopes, we refer to \cite{Grunbaum2003}.


It has long been known, at least in three dimensions, that the combinatorial structure of a polytope does not determine its decomposability. Thus it is natural to call a polytope {\it combinatorially decomposable} if every polytope combinatorially equivalent to it is decomposable, {\it combinatorially indecomposable} if every polytope combinatorially equivalent to it is indecomposable, and {\it conditionally decomposable} if one polytope combinatorially equivalent to it is decomposable and another one combinatorially equivalent to it is indecomposable. The concept of decomposability of polytopes was introduced by Gale \cite{Gale1954} (with a different name); Shephard \cite{Shephard1963} then made a serious study of it.

The existence of conditionally decomposable 3-polytopes was first given by Meyer \cite[Section 3.1]{Meyerthesis}, that the cuboctahedron is conditionally decomposable, followed by Kallay \cite{Kallay1982} who extended the study of the decomposability of polytopes to the study of the decomposability of geometric graphs, and found an example with 10 vertices. 



Smilansky \cite{Smilansky1987} made a groundbreaking study of decomposability of polytopes, establishing \cite[Section 6]{Smilansky1987} that conditionally decomposable $3$-polytopes with $V$ vertices and $F$ facets exist only if
$V \le F \le 2V-8$. There he also announced the converse, i.e. that there is a conditionally decomposable $3$-polytope with $V$ vertices and $F$ facets whenever $F$ and $V$ satisfy this inequality, but until recently the proof was only available in \cite[Chapter 8]{Smilanskythesis}. For a sketch of this proof, see \cite[Problem 6.5.15]{Pineda-2023}. It is easy to see that for $3$-polytopes, the conditionally decomposable polytopes with $8$ vertices (and $8$ facets) are the ones with the minimum number of vertices. Those examples in Smilansky's thesis all have a line segment as a summand. He effectively gave two conditionally decomposable examples with $V=F=8$; these turn out to be the examples numbered 282 and 288 in the catalogue due to Federico \cite{Federico1975}, of all $3$-polytopes with up to $8$ faces. (We will refer henceforth to the $n^{th}$ example in this catalogue as $Fn$.) These two examples were rediscovered in \cite{Przeslawski-Yost-2008} and \cite{Yost2007}.  

We need to know some sufficient conditions for the indecomposability of polytopes. Rather than give a detailed account, we just summarize the results which we need. Kallay \cite{Kallay1982} defined a {\it geometric graph} as any graph whose vertices lie in some Euclidean space, and whose edges are some of the segments between them. This includes the edge graph, i.e. the $1$-skeleton, of any polytope. He then generalized to such graphs the notion of decomposability of polytopes. We will say that a geometric graph $(V,E)$ is decomposable if there is a {\it decomposing function} $\phi:V\to\r^d$ which is not the restriction of a homothety on $\r^d$. By decomposing function we mean any function on the vertex set which respects the direction of edges, i.e. for any edge $e=[p,q]\in E$, there  is a scalar $\lambda_e$  such that $\phi(p)-\phi(q)=\lambda_e (p-q)$. Note that the scalar does not necessarily have to be positive. This generalisation is consistent, i.e. a polytope $P$ is decomposable if and only if its $1$-skeleton $G(V(P), E(P))$ is decomposable \cite[Corollary 5]{Kallay1982}. 

By simple extension of a geometric graph $G$ we mean a graph whose vertices are are all the vertices of $G$ and one new vertex $v$, and whose edges are all the edges of $G$ and two new edges containing $v$.
By {\it strongly connected triangular chain}, we mean a finite sequence of triangles $T_1,\cdots,T_n$ such that $T_i\cap T_{i+1}$ is an edge for each $i$. By {\it triangle} in a geometric graph, we mean three vertices, each pair of which determines an edge of the graph. In the graph of a polytope, a triangle may be, but need not be, a triangular face. For more information on the decomposability of polytopes, we refer to \cite{Schneider} and \cite{Grunbaum2003}. The following omnibus result combines work from \cite{Gale1954}, \cite[Theorem 8]{Przeslawski-Yost-2008} and \cite[Corollary 5]{Kallay1982}.

\begin{theorem}\label{theorem-indecomposable}
	\begin{enumerate}
		\item Any pyramid is indecomposable.	
		\item If the graph of a $d$-polytope $P$ contains an indecomposable subgraph, whose union touches every facet, then $P$ is indecomposable.
		\item Let $C$ be the disjoint union of indecomposable geometric graphs $A$ and $B$ and two disjoint edges $[a_1, b_1]$, $[a_2, b_2]$ with $a_i \in A$, $b_i \in B$, $i=1,2$.
		If the lines $\emph{aff}(a_1, b_1)$ and $\emph{aff}(a_2, b_2)$ are skew then $C$ is indecomposable.
    \item Any simple extension of an indecomposable geometric graph is also indecomposable.
	\end{enumerate}
\end{theorem}




\section{Conditional decomposability}

We begin by recalling the wedge construction,  as it gives most easily the existence of conditionally decomposable polytopes in higher dimensions. 

Let $P$ be a $d$-polytope, expressed as an intersection of closed half-spaces, $P=\{x\in\r^d:Ax\ge 0\}$, where each $A$ is an affine function taking values in $\r^d$ and the inequality is meant coordinate-wise. Let $F$ be a nonempty proper face of $P$  supported by an affine function $f$, i.e. $f(x)\ge0$ for all $x\in P$ and $F=\{x\in P:f(x)=0\}$. Then the {\it wedge} of $P$ at $F$ is the $(d+1)$-polytope $W(P,F)=\{(x,t)\in\r^{d+1}:Ax\le b, 0\le t\le f(x)\}$. Two of the facets of $W(P,F)$ are copies of $P$, one called the lower base in the hyperplane $t=0$, and one called the upper base in the hyperplane $t=f(x)$. All vertices of the wedge lie in either the upper base or the lower base, and their intersection is just $F$. The other facets of $W(P,F)$ are 
wedges that contain faces of $F$, and prisms that are disjoint from $F$. For further details, refer to \cite[Proposition 1.4]{KleeWalkup1967}  or \cite[Section 2.5]{Pineda-2023}.

The next lemma gives a fundamental relationship between wedges and Minkowski sums. Recall that for a given normal vector, the corresponding face of the Minkowski sum is the sum of the corresponding faces of the summands.

\begin{lemma}\label{wedge_sum}
Let $P_1$ and $P_2$ be $d$-polytopes with nonempty faces $F_1$ and $F_2$, which 
are determined by the same normal vector. Then
$$ W(P_1+P_2, F_1+F_2) = W(P_1, F_1) + W(P_2, F_2). $$
\begin{proof}
Denote by ${P_i}$ the lower base of $W(P_i, F_i)$, and by $\overline{P_i}$ the upper base of $W(P_i, F_i)$. It is clear that $P_1+P_2$ and $\overline{P_1}+\overline{P_2}$ are facets of $W(P_1, F_1) + W(P_2, F_2)$. Their intersection will be a face determined by the common normal vector of  $F_1$ and $F_2$, i.e. $F_1+F_2$. This establishes that $W(P_1, F_1) + W(P_2, F_2)$ contains $W(P_1+P_2, F_1+F_2)$. 

Conversely, we need to show that every vertex of the Minkowski sum lies in the wedge. It suffices to show that the sum of a vertex of $W(P_1, F_1)$ and a vertex of $W(P_2, F_2)$ lies in  $W(P_1+P_2, F_1+F_2)$. This is clear if both vertices lie the upper base or if both vertices lie the lower base. We are left with the case of a vertex in the lower base of $W(P_1, F_1)$ and a vertex in the upper base of $W(P_2, F_2)$. These may be expressed in the form $(v_1,0)$, where $v_1\in P_1\setminus F_1$, and $(v_2,f(v_2))$, where $v_2\in P_2\setminus F_2$. But the sum of these two is a convex combination of $(v_1+v_2,0)\in P_1+P_2$ and $(v_1+v_2,f(v_1)+f(v_2))\in\overline{P_1}+\overline{P_2}$.  
\end{proof}
\end{lemma}

\begin{theorem}\label{wedge_thm}
  Let $P$ be a $d$-polytope and $F$ a nonempty proper face. Then  
  \begin{enumerate}
    \item $P$ is decomposable if and only if $W(P,F)$ is decomposable; $P$ is indecomposable if and only if $W(P,F)$ is indecomposable. 
    \item if $P$ is conditionally decomposable, then $W(P,F)$ is conditionally decomposable.
    \item for $d\ge3$ and $f_0\ge4d-4$, there is a conditionally decomposable $d$-polytope with $f_0$ vertices.
  \end{enumerate}
  \begin{proof}
    (1) First suppose $P$ is decomposable, say $P=P_1+P_2$, where $P_1$ and $P_2$ are not homothetic to one another. Denote by $F_1$ and $F_2$ the faces of $P_1$ and $P_2$ with the same outer normal as $F$; then $F=F_1+F_2$. Now $W(P_1, F_1)$ and $W(P_2, F_2)$ cannot be homothetic to one another because the facets corresponding to  $P_1$ and $P_2$ are not. By \cref{wedge_sum}, $W(P, F)=W(P_1, F_1)+W(P_2, F_2)$ is decomposable.
    
    Conversely, if $P$ is indecomposable, the graph of $P$ is indecomposable and touches every facet of $W(P,F)$, making $W(P,F)$  indecomposable.

    (2) If $P$ has both decomposable and indecomposable realisations, so does $W(P,F)$.
    
    (3) For $d=3$, we know that there is a conditionally decomposable 3-polytope with $4d-4$ vertices, $d+5$ facets, one facet $F$ which contains all but four of the vertices, and a simplex facet which meets $F$ in a ridge. Forming the wedge over $F$ gives us a 
    polytope with dimension one higher, 4 more vertices, and one more facet, and two facets with the properties just specified. Repeating, we have the existence of conditionally decomposable $d$-polytopes with $4d-4$ vertices and $d+5$ facets. In the lower dimensional base at each step, there is a simplex facet that intersect with $F$ at a ridge, so in the $W(P,F)$, there will be a simplex facet whose intersection with $P$ is a ridge. So it gives a simplex facet in the wedges constructed this way. Since gluing a pyramid over the simplex facet does not change the decomposability \cite[Proposition 6]{Przeslawski-Yost-2016}, it gives the existence of conditionally decomposable $d$-polytope with any number of vertices from $4d-4$ onwards.
  \end{proof}
\end{theorem}

\begin{rmk}
In an earlier version of this manuscript, we constructed examples with explicit coordinate representations. It was pointed out by an anonymous referee that most of these examples were wedges. Taking this into account has substantially improved the quality of this paper. In particular, \cref{wedge_sum}, \cref{wedge_thm}(2) were observed by this referee.
\end{rmk}

One special case of \cref{wedge_thm} is having $F282$ or $F288$ as a face, which were our original examples. When $d=4$, this gives two conditionally decomposable 4-polytopes with $f$-vector $(12,28,23,9)$, with the decomposable version having a line segment for a summand. There is a third conditionally decomposable 3-polytope with 8 vertices and 8 facets, namely the gyrobisfastigium $F287$, which is the sum of two triangles. Applying \cref{wedge_thm} in this case, we obtain another conditionally decomposable 4-polytope with $f$-vector $(12,28,23,9)$, but without a line segment for a summand.



Several other conditionally decomposable polytopes are of interest. It will be convenient to recall some notation: $\Delta(m,n)$  denotes as usual the Minkowski sum of an $m$-dimensional simplex and an $n$-dimensional simplex, lying in complementary subspaces of $\mathbb{R}^{m+n}$, or any polytope combinatorially equivalent to it. Similarly one defines  $\Delta(l,m,n)$. Then $\Delta(1,2)$ is the familiar triangular prism and $\Delta(1,1,1)$ is the cube. More generally $\Delta(1,d-1)$ is a prism based on a simplex; we will refer to it henceforth simply as a {\it prism}, or a $d$-prism if the dimension needs to be specified. 

\begin{remark}\label{rem:4d-2}
    There is a simpler construction available for a conditionally decomposable $d$-polytope with $4d-2$ vertices, which naturally generalizes Kallay's example. 
    \begin{proof}
    Start with a $d$-polytope $Q$ of the form $\Delta(1,1,d-2)$ with vertices
	\begin{equation*}
	\begin{aligned}
	& A_i=e_i, i=1,\dots, d-2, \\
	& A_{d-1}=(0,\dots,0), \text{ the origin}, \\
	& B_i=A_i+(0,\dots,0,1,0), i=1,\dots, d-1, \\
	& C_i=A_i+(0,\dots,0,0,1), i=1,\dots, d-1, \\
	& D_i=A_i+(0,\dots,0,1,1), i=1,\dots, d-1.
	\end{aligned}
	\end{equation*}
    Perturb $Q$ first, replacing $C_i$ with $C_{i}'=A_i+(0,\dots,0,-\varepsilon,3)$, $i=1,\dots,d-1$, and retaining the rest of the vertices. This yields a polytope $Q'$ which is  combinatorially equivalent to $Q$ but with some skew lines. Next we stack appropriate pyramids over two facets of $Q$, namely $\text{conv} \{C_i, D_i: i=1,\dots, d-1 \}$ and $\text{conv} \{A_i, B_i: i=1,\dots, d-1 \}$ to obtain a new polytope $P$. Having a segment for a summand, $P$ is decomposable. Then stack appropriate pyramids over the corresponding facets of $Q'$, i.e. $\text{conv} \{C_{i}', D_i: i=1,\dots, d-1 \}$ and $\text{conv} \{A_i, B_i: i=1,\dots, d-1 \}$,  to obtain a new polytope $P'$. Since the geometric graph of a pyramid is indecomposable, we obtain that $P'$ is indecomposable although it has the same face lattice as $P$ \cite[Proposition 6]{Przeslawski-Yost-2016}. Thus we have an alternative construction of conditionally decomposable polytopes with $4d-2$ (or more) vertices.

    \end{proof}
\end{remark}

Figure 1 shows the skeleton of some examples when $d=4$.

\begin{figure}[H]
	\begin{tabular}{ccc}
		\includegraphics[width=0.3\textwidth]{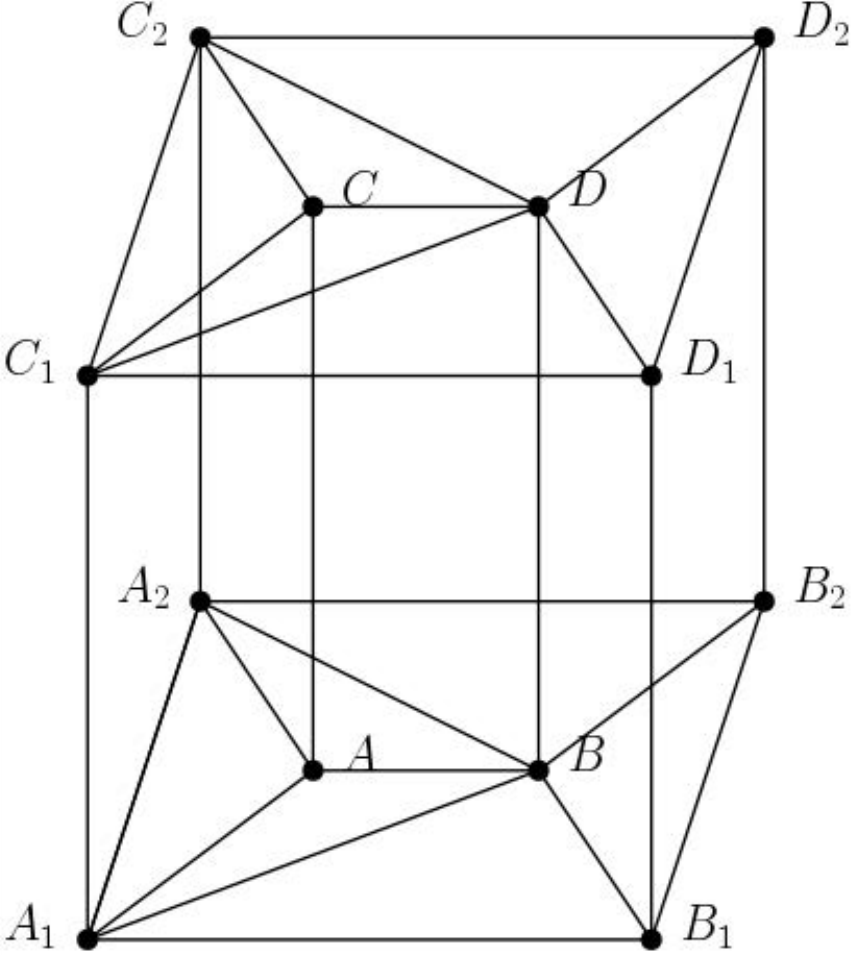} &
		\includegraphics[width=0.3\textwidth]{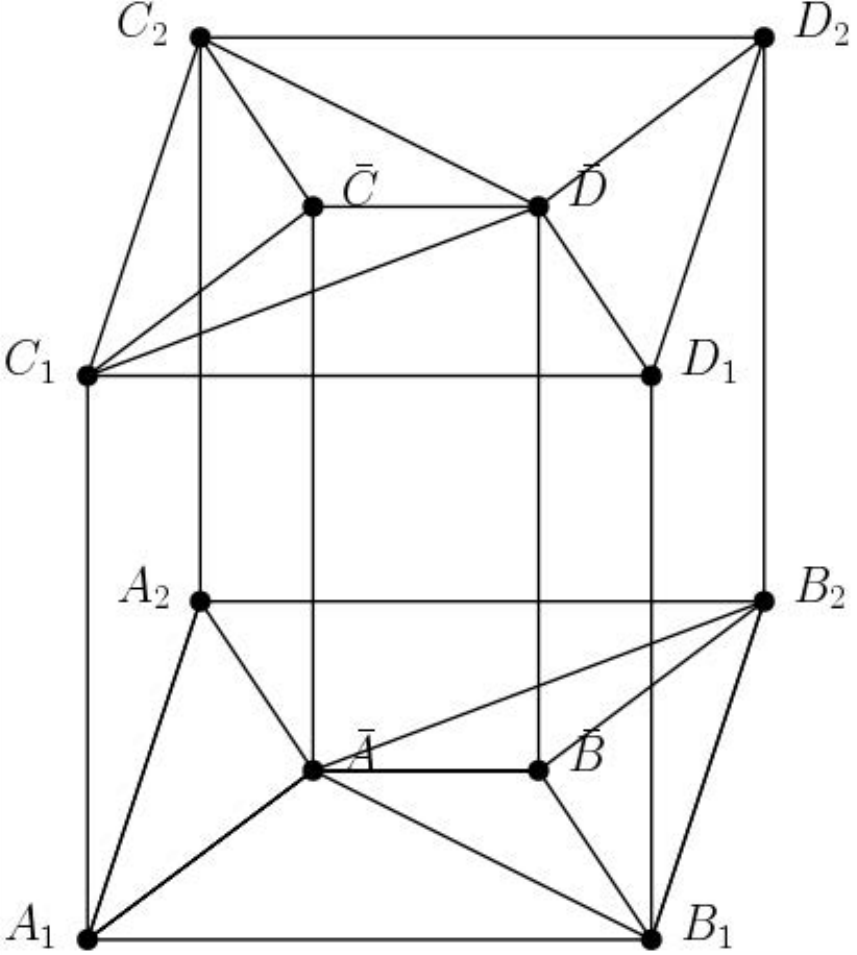}&
		\includegraphics[width=0.3\textwidth]{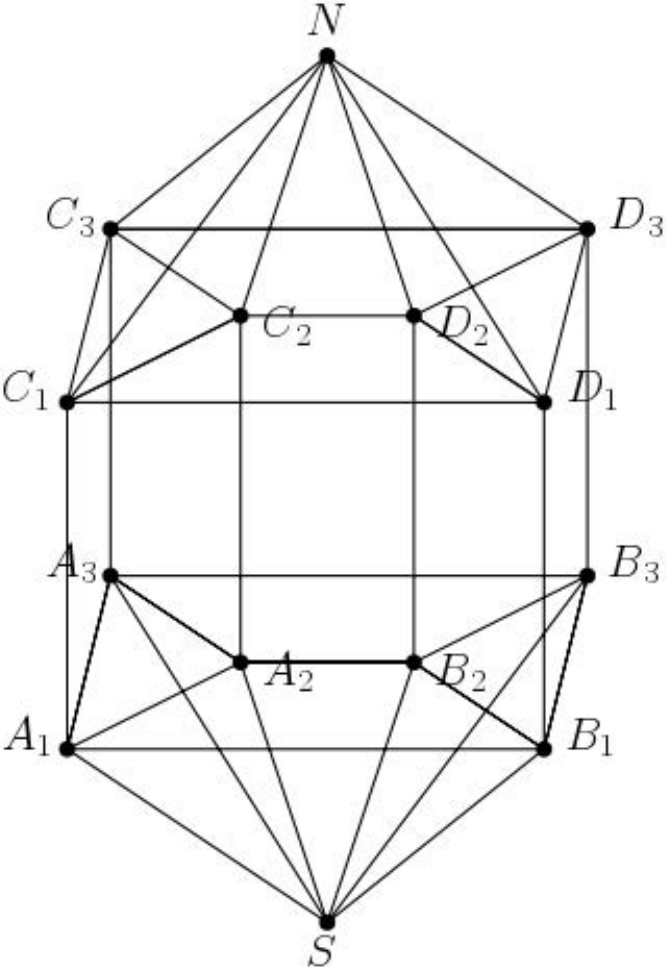}\\
		(1) two facets  $F282$ & (2) two facets  $F288$  &  (3) $P$ has $4d-2$ vertices
	\end{tabular}
	\caption{$4$-dimensional examples}
\end{figure}



Another way to look at these examples is to consider them as the convex hull of the union of three planar quadrilaterals, with different arrangements providing more examples. Variants of this leads to more example. For simplicity, we present just one more, which is not covered by the previous results. Amongst its facets is $F36$, which is a decomposable polyhedron, obtained by perturbing one vertex of a cube. 

\begin{example}\label{ex:last}
    There is a conditionally decomposable 4-polytope with 12 vertices and 9 facets, namely two  prisms, two  copies of $F36$, two simplices, two square pyramids and one of the form $F282$. In particular, it has only one conditionally decomposable facet.
    \begin{proof}
    Let $A_1=(1,0,0,0)$, $B_1=(1,1,0,0)$, $C_1=(1,0,2,0)$, $D_1=(1,1,1,0)$,  $A_2=(0,0,0,0)$, $B_2=(0,1,0,0)$, $C_2=(0,0,2,0)$, $D_2=(0,1,2,0)$, $A_3=(0,0,1,1)$, $B_3=(0,1,0,1)$, $C_3=(0,0,2,1)$, $D_3=(0,1,2,1)$. It is not hard to verify that these are the vertices of a polytope whose facets are   two  prisms $A_1 A_2 A_3 C_1 C_2 C_3$ and $B_1 B_2 B_3 D_1 D_2 D_3$, two copies of $F36$ namely $A_i B_i C_i D_i A_2 B_2 C_2 D_2$ for $i=1$ and $i=3$, two simplices $A_1 A_2 A_3 B_3$ and $C_1 D_1 D_2 D_3$, two square pyramids $A_1 A_2 B_1 B_2 B_3$ and  $C_1 C_2 C_3 D_2 D_3$ and one copy of $F282$, $A_1 B_1 C_1 D_1 A_3 B_3 C_3 D_3$. By perturbing the vertices $C_1,C_2,C_3$ as before, this can be shown to be conditionally decomposable. 
    \end{proof}
\end{example}


\section{The number of vertices}

It is natural to ask whether $4d-4$ is a sharp lower bound for the number of vertices of conditionally decomposable polytopes. We now show that among examples which have a line segment for a summand, it is the best possible.

Notice that if $P$ has a line segment for a summand, then the sums of $P$ with this line segment with different length are all combinatorially equivalent, since the summands would lie in the relative interior of the cone of summands of $P$ which are all locally similar, and hence combinatorially equivalent \cite[Corollary 2.7]{Meyerthesis}.

The following lemma is an elementary exercise.

\begin{lemma}\label{lem:easy} If three lines in $\mathbb{R}^d$ are pairwise coplanar, but not mutually coplanar, then they are either parallel or concurrent.
\end{lemma}

\begin{theorem}\label{thm:4d-5}
    Let $P$ be a $d$-polytope with no more than $4d-5$ vertices which has a line segment for a summand. Then $P$ is combinatorially decomposable.	
\end{theorem}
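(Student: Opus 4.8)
\section*{Proof proposal}

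The plan is to read off the combinatorial shadow of a line-segment summand, feed in the vertex bound, and finish with the two ingredients singled out above --- Lemma~\ref{lemma1} and the indecomposability toolkit of Theorem~\ref{theorem-indecomposable} (together with the quoted fact that a $d$-polytope with at most $2d-1$ vertices carries a strongly connected triangular chain covering \emph{all} of its vertices, hence is indecomposable). I would argue by contradiction: since $P$ itself is decomposable, ``$P$ is not combinatorially decomposable'' means there is a polytope $P'$ combinatorially equivalent to $P$ which is indecomposable, and the goal is to reach a contradiction from that together with $|V(P)|\le 4d-5$.

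\emph{Structural step.} Write $P=Q+[0,a]$, and, absorbing into $[0,a]$ any segment summand of $Q$ parallel to $a$ (harmless by Lemma~\ref{lemma1}), assume $[0,a]$ is not a summand of $Q$. Then the normal fan of $P$ is that of $Q$ cut by $a^{\perp}$, so a hyperplane $H$ orthogonal to $a$ through the interior of $P$ splits $V(P)$ into an upper cap $U$ and a lower cap $L$, joined by ``vertical'' edges (those crossing $H$), and splits the facets into upper, lower and lateral ones, the lateral facets being exactly those containing a vertical edge. Imitating the proof of Lemma~\ref{lemma1}, I would check that this whole decomposition is recoverable from the face lattice, so $P'$ inherits a partition $V(P')=U'\sqcup L'$, a set of lateral facets, and a matching of vertical edges with the same incidences. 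Two auxiliary observations: by Balinski's theorem the edge cut between $U'$ and $L'$ --- which is exactly the set of vertical edges --- has at least $d$ elements; and an upper-cap vertex is adjacent to a lower-cap vertex only when the two form a matched pair, so a vertical edge lies in no triangle of $G(P')$ and therefore every strongly connected triangular chain of $G(P')$ lies wholly inside a single cap.

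\emph{Counting step and the crux.} Since $|V(P)|\le 4d-5<2(2d-2)$, the two caps cannot both have at least $2d-2$ vertices; say $|L'|\le 2d-3\le 2d-1$. The closed lower cap is a $(d-1)$-ball in $\partial P'$ with vertex set $L'$, and (treating separately the easy case where it is a single facet) $\text{conv}(L')$ is a $d$-polytope with at most $2d-1$ vertices, so it carries a strongly connected triangular chain covering $L'$; one then wants to arrange, possibly after a perturbation within the combinatorial type, that this chain uses only edges of $P'$. The remaining step --- turning the smallness of $L'$ into a contradiction --- is where the real difficulty lies and where the sharpness of the constant $4d-4$, witnessed by the constructions above, must enter. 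One cannot invoke a converse of Theorem~\ref{theorem-indecomposable}, and a small indecomposable cap alone does not help (the paper's own $(4d-2)$-vertex example has indecomposable pyramidal caps and is realised indecomposably via Theorem~\ref{theorem-indecomposable}(4)); nor can one aim for a chain touching \emph{every} facet, since the cap structure precludes such a chain altogether. The route I would pursue is to show that when $|U'|+|L'|<4d-4$ it is combinatorially impossible to realise both caps indecomposably with two skew vertical edges in a way that closes up to a polytope --- i.e.\ that the hypotheses of Theorem~\ref{theorem-indecomposable}(4) (and, with more care, the other indecomposability mechanisms) are obstructed by the vertex deficit in the smaller cap --- and I expect this ``obstruction'' lemma, controlling which faces of a general realisation the vertical edges bound, to be the technical heart of the argument. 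A possible alternative finish, bypassing Theorem~\ref{theorem-indecomposable}, is to bound the dimension of the Minkowski-summand cone of \emph{every} realisation from below by $f_{d-1}(P)-\sum_{v\in V(P)}\bigl(\#\{\text{facets through }v\}-d\bigr)$ and to verify, from the cap/band description and $f_0(P)\le 4d-5$, that this is at least $d+2$; being a combinatorial inequality, that would give combinatorial decomposability at one stroke, and checking it is then the main obstacle instead.
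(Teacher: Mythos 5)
Your proposal is not a complete proof: you yourself flag the crux (``turning the smallness of $L'$ into a contradiction'') as unresolved, and both suggested finishes are speculative. More importantly, the overall strategy is misdirected. Arguing by contradiction from an indecomposable realisation $P'$ forces you to rule out \emph{every} possible indecomposability mechanism, but Theorem \ref{theorem-indecomposable} gives only sufficient conditions with no converse, so ``obstructing Theorem \ref{theorem-indecomposable}(4) and the other mechanisms'' cannot close the argument. Your counting also lands in the wrong place: you put the triangular chain inside a cap ($\mathrm{conv}(L')$, whose edges need not even be edges of $P'$, as you note), and, as you yourself observe via the $4d-2$ example, indecomposable caps prove nothing either way. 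The proposed summand-cone inequality is likewise unverified and not a known bound.

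The paper's proof is direct, and the idea you are missing is to apply the triangular-chain theorem not to a cap but to the \emph{cross-section}. After stretching the segment (Lemma \ref{lemma1}) so that the vertices split into parts $A$ and $B$, cut $P$ with two parallel hyperplanes separating $A$ from $B$; the sections $C_1\cong C_2$ are $(d-1)$-polytopes whose vertices correspond bijectively to the vertical edges, and since $\min(|A|,|B|)\le 2d-3<2(d-1)$, Theorem \ref{theorem-indecomposable}(2) gives a strongly connected triangular chain in $C_1$. Each triangle of this chain, together with its partner in $C_2$, sits in a $3$-face of $P$, so in \emph{any} polytope combinatorially equivalent to $P$ the three lines spanned by the corresponding vertical edges are pairwise coplanar, hence concurrent or parallel; because consecutive triangles share an edge, the chain propagates this to all the vertical lines at once. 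If they are concurrent, a projective transformation through the point of concurrency (legitimate since decomposability is a projective invariant, Kallay 1984) makes them parallel, and parallel vertical edges exhibit a segment summand. Thus every realisation is decomposable, with no contradiction argument needed; the vertex bound enters only through the size of the cross-section, not through any obstruction to indecomposability certificates.
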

\begin{proof}
	Assume that $P=Q+[0,c]$ for another polytope $Q$ and vector $c$. Denote by $A$ the set of vertices of $P$ which are also vertices of $Q$, and by $B$ the set of vertices of $P$ of the form $v+c$ for some vertex $v$ of $Q$.
	
	By replacing $c$ if necessary by a suitable multiple of itself, we can ensure that
	$$\min\{b\cdot c:b\in B\}=\min\{v\cdot c+\|c\|^2:v+c \text{ is a vertex of } P\text{ and }v\text{ is a vertex of } Q\} $$
	is as large as we like, in particular, larger than $\max\{a\cdot c:a\in A\}$. This means that we can find a hyperplane $H$ (normal to the direction $c$) which strictly separates $A$ and $B$. Clearly $H$ contains an interior point  of each edge of $P$ parallel to $[0,c]$ (and not of any other edges). 
	
	Of the two open half-spaces associated with $H$, clearly one contains $A$ and one contains $B$. Every edge of $P$ with one vertex in $A$ and one vertex in $B$ will be parallel to $[0,c]$. Any other edge of $P$ (i.e. with both vertices in $A$ or both vertices in $B$) will not be parallel to $[0,c]$. 
	
	Now consider the $(d-1)$-polytope $C= P \cap H$. Every vertex of $C$ is an interior point of a unique edge of $P$ parallel to $[0,c]$. Every edge of $C$ is contained in a unique 2-face of $P$, determined by the two edges of $P$ through its vertices. Every triangular 2-face of $C$ is the intersection of $H$ with a unique 3-face of $P$; this 3-face is determined by the three edges of $P$ through the vertices of the triangle.
	
	Since $P$ has no more than $4d-5$ vertices, at least one of $A,B$, let us say $A$, has no more than $2(d-1)-1$ vertices. The same must hold for $C$, and \cite[Lemma 5(ii)]{Yost1991} then informs us that there is a strongly connected triangular chain of faces containing all the vertices of $C$.
	
	In terms of $P$ this means that there is a finite sequence $S_1,S_2,\ldots$ of triples of edges of $P$ such that
	\begin{enumerate}
		\item for each such triple $S=\{e_1,e_2,e_3\}$, there is a unique 3-face of $P$ containing the three edges, and each two of them determine a 2-face, 
		\item each successive two such sets $S_i$, $S_{i+1}$ have two common edges; in particular, the respective 3-faces intersect in a 2-face of $P$,
		\item every edge with one vertex from $A$ and one vertex from $B$ belongs to at least one $S_i$.
	\end{enumerate}
	
	Now consider a polytope $P'$ combinatorially equivalent to $P$, and denote by $A'$ and $B'$ the collections of vertices corresponding to $A$ and $B$. Then $P'$ will have a finite sequence $S_1,S_2,\ldots$ of triples of edges, each with one vertex from $A'$ and one vertex from $B'$, such that
	\begin{enumerate}
		\item every edge with one vertex from $A'$ and one vertex from $B'$ belongs to at least one $S_i$,
		\item for each such $S=\{e_1,e_2,e_3\}$,  each two edges determine a 2-face and so are coplanar, but the three of them  are not coplanar,
		\item each successive two such sets $S_i$, $S_{i+1}$ have two common edges; in particular, the respective 3-faces intersect in a 2-face of $P$,
		\item all edges parallel to $[0,c]$ belong to at least one $S_i$.
	\end{enumerate}
	
	Applying \cref{lem:easy}, we see that all of the edges between $A'$ and $B'$ are either parallel or contained in concurrent lines. In the latter case, the concurrent point will not belong to $P'$.  Taking a suitable projective transformation (i.e. one whose kernel contains the concurrent point without intersecting  $P$), we obtain a $d$-polytope combinatorially equivalent to $P$ with all those edges being parallel. In either case, we see that $P'$ is  combinatorially equivalent to a polytope with a line segment as a summand. The combinatorial decomposability of $P$ then follows from the fact that decomposability of a polytope is projectively invariant \cite[Theorem 3.6]{Kallay1984} (see also \cite[Application (f), p. 40]{Smilansky1987}).
\end{proof}	

The only decomposable polytope with $2d$ vertices is a simplicial prism (\cite{Przeslawski-Yost-2016} and the references therein), and it has line segments for a summand;  a decomposable polytope with $2d+1$ vertices with $d\ge5$ is either a pentasm or a capped prism \cite[Section 5]{PUY2018Excess} (again these have line segments for summands); for a decomposable polytope with $2d+2$ vertices, when $d\ge 6$, they are either 2-capped prism, or have a line segment for a summand, whose cross section is a $(d-1)$-polytope with $d+1$ vertices \cite{wjphdthesis}. 

Based on these, we suspect that when the decomposable polytope does not have too many vertices, it is combinatorially (even projectively) equivalent to a polytope having a line segment for a summand.

\begin{lemma}
    Let $P$ be a $d$-dimensional polytope having a line segment for a summand, and with $\le 3d-4$ vertices. Let $V_1$, $V_2$ be the partition of vertices of $P$ such that  each vertex in $V_1$ (respectively $V_2$) has at most one neighbor in $V_2$ (respectively $V_1$). Then the subgraphs of $P$ induced by $V_1$ and $V_2$ are indecomposable.
    \begin{proof}
    We proceed by induction on the dimension. The induction base is $d=4$, where $P$ is a simplicial prism, the subgraphs $G(V_1)$, $G(V_2)$ are the graph of 3-simplices, which are indecomposable.
		
    Suppose it is true for dimension $d-1$. Arguing as in \cref{thm:4d-5} we can consider a cross section of $P$ to show that there are at least $d$  ``side facets" of $P$ which each have a line segment for a summand. Let $F$ be one of them, with partition of vertices $V_1(F) \subset V_1$, $V_2(F) \subset V_2$. Then $F$ has $[2(d-1), 3(d-1)-3]$ possible  vertices. 
    
    Suppose $F$ has between $2(d-1)$ and $3(d-1)-4$  vertices: then the subgraphs induced by $V_1(F), V_2(F)$ are indecomposable, respectively, by induction. There are $\le d-2$ vertices outside $F$. Let $v$ be one of them and assume without loss of generality that $v\in V_1$. Note that $v$ has to be adjacent to at least three vertices of $V_1(F)\cup V_2(F)$. Since $v$ is adjacent to at most one vertex of $V_2$, $v$ is adjacent to at least two vertices of $V_1(F)$, so $v$ is a simple extension of $G(V_1(F))$, the subgraph induced by $V_1(F) \cup \{v\}$ is indecomposable. 
    
    The remaining case is that $F$ has $3(d-1)-3$ vertices and there are exactly 2 vertices $x,y$ outside $F$, then no vertex in $F$ is adjacent to both $x,y$, i.e. each vertex in $V_1(F)$ is adjacent (and only) to (say) $x$, and each vertex in $V_2(F)$ is adjacent (and only) to $y$. Now again consider a hyperplane $H$ separating $V_1$ from $V_2$. The intersection of $F$ with the corresponding half-spaces will be two $(d-1)$-polytopes with a common facet, and all other vertices will be in $V_1(F)$ or $V_2(F)$. Removing a facet from any polytope leaves us with a connected graph. So the subgraph induced by $V_1(F)$ is connected, i.e. there is a polyhedral path connecting all the vertices in $V_1(F)$, and thus there is a strongly connected triangular chain of faces in the subgraph induced by $V_1(F)\cup\{x\}$ connecting each vertex. Hence $G(V_1)$ is indecomposable, and the same holds for $G(V_2)$.
    \end{proof}
\end{lemma}

\begin{theorem}\label{thm_linesegment}
    Let $P$ be a decomposable $d$-polytope with $[2d, 3d-4]$ vertices, then $P$ has a line segment for a summand with respect to projective equivalence.
    \begin{proof}
    We proceed by induction. The induction base is $d=4$ which is correct.
    Suppose it is true for $d-1$. By \cref{theorem-indecomposable}  we can find  a decomposable facet $F$ of $P$. (In fact, $P$ must have at least $d$  decomposable facets \cite[Theorem 17]{PUY2018Excess}.) The number of vertices of $F$ is in the range $[2(d-1), 3d-6]$; by \cref{theorem-indecomposable}(1) we can exclude the case that $P$ is a pyramid. 
 		
    Suppose first that there is a facet $F$ with $3d-6$ vertices. Then there are exactly two vertices $u, v$ outside $F$ which are adjacent. Then every vertex in $F$ has exactly one edge outside $F$, otherwise there is a triangle that touches every facet of $P$. We claim that the adjacency of vertices of $F$ with the vertices of $u, v$ defines a decomposing function $f$ for $F$, namely, if $x\in V(F)$ and $x$ is adjacent to $u$, then $f(x)=u$. We may assume that the line containing $[u,v]$ is parallel to $\text{aff}\{F\}$. It suffices to show that if $x\in V(F)$ is adjacent to $u$, and $y\in V(F)$ is adjacent to $v$, and $x, y$ are adjacent, then $[x, y]$ is parallel to $[u, v]$. Since there must exist a 2-face containing $[x, y]$ but not contained in $F$, it follows that the quadrilateral $x, y, v, u$ is a 2-face. So the lines containing the four edges are coplanar, hence $[u,v]$ is parallel to $[x, y]$. By defining $f(u)=u, f(v)=v$, it defines a decomposing function for $P$. It then shows that $P$ has a line segment for a summand.
		
    \begin{figure}[H]
		\includegraphics[width=0.35\textwidth]{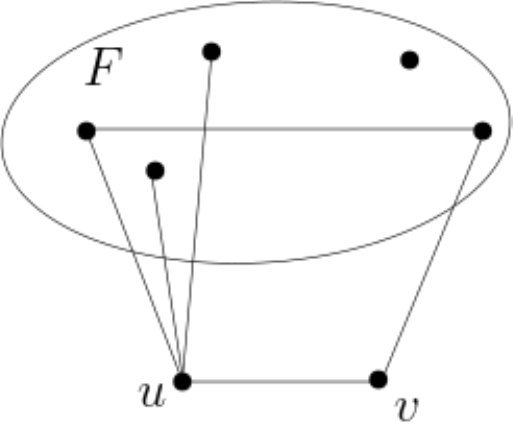}
    \end{figure}
 		
    Assume that the number of vertices of a decomposable facet $F$ is in the range $[2(d-1), 3(d-1)-4]$. By induction, $F$ has a line segment for a summand. Without loss of generality, we may assume that the side edges of $F$ are parallel.  Denote the partition of vertices of $F$ by $V_1(F), V_2(F)$. There are at most $d-2$  vertices outside $F$. So each such vertex $v$ will be adjacent to at least three vertices in $F$. \\
     If $v$ is only adjacent to one side of vertices of $F$, then it is a simple extension, making the union an indecomposable subgraph.\\
    Otherwise, $v$ is adjacent to both sides of vertices of $F$, say it has at least two neighbors in $V_1(F)$, and at least one neighbor $v'$ in $V_2(F)$. This implies that subgraph $G(V_1(F)\cup v)$ is indecomposable, and that the line containing $v, v'$ is skew to almost every such side edge. Now we have indecomposable subgraphs connected by skew lines, so $G(V_1(F)\cup \{v\} \cup V_2(F))$ is indecomposable. There are at most $d-3$ vertices not included in this subgraph, which implies that it is not enough to form a facet, hence $G(V_1(F)\cup \{v\} \cup V_2(F))$ is a subgraph which touches every facet of $P$, making $P$ indecomposable by \cref{theorem-indecomposable}, which is a contradiction.\\
    So, every vertex outside $F$ is a simple extension of either $G(V_1(F))$ or $G(V_2(F))$. (In fact, we can claim that there must be an edge outside $F$ that is parallel to its side edges. This can be seen by looking at the other facet $F'$ that intersects with $F$ at a ridge, where $F'$ has a line segment for a summand, by induction.) \\
    Now we have two indecomposable subgraphs, what remains to show is that any edge connecting them is parallel to the side edges of $F$, which is a routine discussion. Hence we can define a decomposing function whose image is a line segment. 
    \end{proof}
\end{theorem}

Note that $\Delta(2,d-2)$ is decomposable, has $3d-3$ vertices and does not have a line segment for a summand. So the previous theorem cannot be extended further.

\begin{corollary}
    There is no conditionally decomposable polytope with $[2d, 3d-4]$ vertices.
\end{corollary}


\section{A lower bound result}

Recall Gr{\"u}nbaum's lower bound theorem \cite{Grunbaum2003}, that was recently proved by Xue \cite{Xue2021} that, for a $d$-polytope with $d+m$ vertices, $m\le d$, the number of $k$-faces of the polytope is at least 
$$\phi_k(d+m,d) =\binom{d+1}{k+1}+\binom{d}{k+1}-\binom{d+1-m}{k+1}.$$
The minimiser is a $(d-m)$-fold pyramid over a $m$-dimensional simplicial prism, denoted by $M(k,d-k)$. 

The lower bound of the number of $k$-faces of $d$-polytopes with $2d+1$ vertices was established by Pineda-Villavicencio, Ugon, the second author \cite{2dplus1}, and by Xue \cite{Xue2dplus1}, independently. The lower bound on the number of edges of $d$-polytopes with $2d+2$ vertices was established by Pineda-Villavicencio and the second author \cite{2dplus2}. It is conjectured in \cite[Section 3]{2dplus2} that for $d$-polytopes with $2d+m \in [2d+3, 3d-6]$ vertices, the lower bound of the number of edges (and perhaps the $k$-faces) is reached by the polytope resulting from truncating a simple vertex of $M(m+1,d-m-1)$, denoted by $M(m+1,d-m-1)^{\text{cut}}$. Note that 
$$f_k(M(m+1,d-m-1)^{\text{cut}})=\binom{d+1}{k+1}+2\binom{d}{k+1}-\binom{d-m}{k+1}.$$ 

We will establish a partial answer, that $M(m+1,d-m-1)^{\text{cut}}$ reaches the lower bound amongst the decomposable polytopes.
 
Rcall Xue's sequence \cite{Xue2021} that the number of $k$-faces of a $d$-polytope containing some vertices $v_2,\dots, v_l$ but not $v_1$ is at least 
$$\sum_{i=2}^{l}\binom{d+1-i}{k}.$$

\begin{lemma}
    Let $P$ be a $d$-polytope with $d+m$ vertices, $2 \le m\le d$, and let $v$ be a vertex of $P$. We have the following, \\
    (i) If $P$ is a pyramid and $v$ is an apex of $P$, where $F$ is the unique facet that does not contain $v$, then $\# k$-faces of $P$ that does not contain $v$ is $f_k(F)$;\\
    (ii) If $P$ is not a pyramid, then $\# k$-faces of $P$ that does not contain $v$ is at least $\phi_k(d+m,d)-\binom{d}{k}$.
\end{lemma}
\begin{proof}
 (i) This is rather obvious, as a face of $P$ is either a face containing $v$ or a face of $F$. And when $m\le d-1$,
    \begin{equation*}
       \begin{aligned}
           f_k(F) & \ge \phi_k(d+m-1, d-1) \\
                  & = \binom{d}{k+1}+\binom{d-1}{k+1}-\binom{d-m}{k+1} \\
                  & =\phi_k(d+m,d)-\binom{d}{k}-\binom{d-1}{k}+\binom{d-m}{k}.
       \end{aligned}
    \end{equation*}
 (ii) Let $F$ be a facet not containing $v$, and the number of vertices of $F$ is $d+s$, $0 \le s \le m-2$. By Xue's sequence,
 \begin{equation}
     \begin{aligned}
         \# k\text{-faces of } P & \ge f_k(F)+\sum_{i=2}^{m-s}\binom{d+1-i}{k} \\
         & \ge \phi_k(d+s,d-1)+\sum_{i=2}^{m-s}\binom{d+1-i}{k} \\
         & = \binom{d}{k+1}+\binom{d-1}{k+1}-\binom{d-s-1}{k+1}+\sum_{i=2}^{m-s}\binom{d+1-i}{k} \\
         & =\phi_k(d+m,d)-\binom{d}{k}-\binom{d-1}{k}+\binom{d+1-m}{k+1}-\binom{d-s-1}{k+1}+\sum_{i=2}^{m-s}\binom{d+1-i}{k} \\
         & =\phi_k(d+m,d)-\binom{d}{k}-\binom{d-1}{k}+\sum_{i=0}^{d-m}\binom{i}{k}-\sum_{i=0}^{d-s-2}\binom{i}{k}+\sum_{i=d-m+s+1}^{d-1}\binom{i}{k} \\
         & =\phi_k(d+m,d)-\binom{d}{k}-\binom{d-1}{k}-\sum_{i=d-m+1}^{d-s-2}\binom{i}{k}+\sum_{i=d-m+s+1}^{d-1}\binom{i}{k} \\
         & \ge \phi_k(d+m,d)-\binom{d}{k},
     \end{aligned}
 \end{equation}
  where the last equality holds if and only if $s=0$, i.e. $F$ is a simplex facet.  
\end{proof}

\begin{theorem}
    Let $P$ be a decomposable $d$-polytope with $2d+m$ vertices, where $m\in [0, d-4]$. Then $f_k(P) \ge \binom{d+1}{k+1}+2\binom{d}{k+1}-\binom{d-m}{k+1}$, for all $k$. When $m=2$, there are exactly two minimizers, the $M(3,d-3)^{\text{cut}}$ and the prism based on $M(2,d-3)$; when $m\ne 2$, there is a unique minimizer, the $M(m+1,d-m-1)^{\text{cut}}$. 
    \begin{proof}
           By \cref{thm_linesegment}, we may assume that $P$ has a line segment for a summand. Then there is a corresponding partition of the vertices $V=V_1\cup V_2$, with the edges between $V_1$ and $V_2$ all parallel. Take a suitable projective transformation $T_1$ such that the corresponding concurrent lines in $P_1=T_1(P)$ intersect in a half space containing $T_1(V_2)$ but not $T_1(V_1)$. Similarly, we can find another projective transformation $T_2$ such that the corresponding concurrent lines in $P_2=T_2(P)$ intersect in a half space containing $T_2(V_1)$ but not $T_2(V_2)$.
    \begin{figure}[H]
	\begin{tabular}{cc}
		\includegraphics[width=0.4\textwidth]{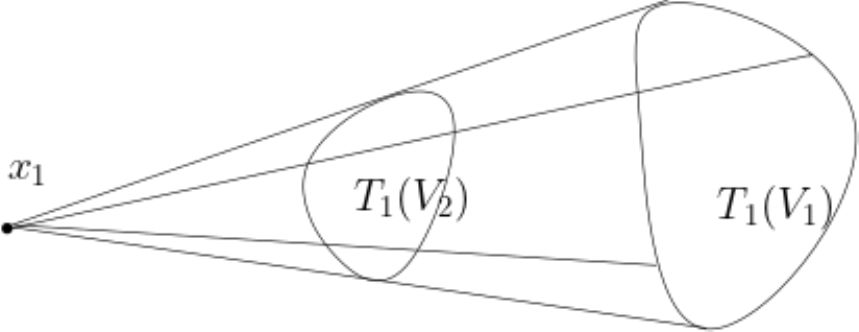}&
		\includegraphics[width=0.4\textwidth]{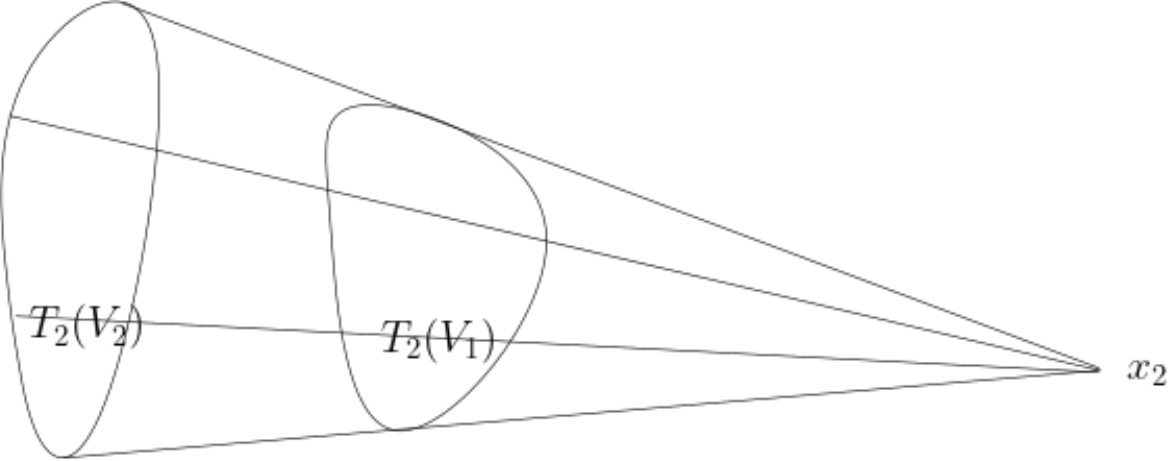} \\
		(1) $P_1$  & (2) $P_2$ 
	\end{tabular}
    \end{figure}
   The number of $k$-faces of $P$ is exactly the number of $k$-faces of $P_1$ (or $P_2$, respectively) plus the number of $k$-faces of $P_2$ (or $P_1$, respectively) that does not contain $x_2$ (or $x_1$, respectively). 
   
   Suppose $P_2$ is not a pyramid. We may assume that $P_1$ has $d+s$ vertices, and $P_2$ has $d+m-s+2$ vertices, $s\ge 1$ and $m-s \ge -1$. Then 
       \begin{equation}
           \begin{aligned}
               f_k(P) & \ge \phi_k(d+s,d)+\phi_k(d+m-s+2,d)-\binom{d}{k} \\
               & =\binom{d+1}{k+1}+\binom{d}{k+1}-\binom{d+1-s}{k+1}+\binom{d+1}{k+1}+\binom{d}{k+1}-\binom{d-m+s-1}{k+1}-\binom{d}{k} \\
               & = \binom{d+1}{k+1}+2\binom{d}{k+1}-\binom{d-m}{k+1}+\binom{d-m}{k+1}-\binom{d+1-s}{k+1} \\
               & \quad +\binom{d+1}{k+1}-\binom{d-m+s-1}{k+1}-\binom{d}{k} \\
               & =\binom{d+1}{k+1}+2\binom{d}{k+1}-\binom{d-m}{k+1}+\sum_{i=0}^{d-m-1}\binom{i}{k}+\sum_{i=0}^{d}\binom{i}{k} \\
               & \quad -\sum_{i=0}^{d-s}\binom{i}{k}-\sum_{i=0}^{d-m+s-2}\binom{i}{k}-\binom{d}{k} \\
               & =\binom{d+1}{k+1}+2\binom{d}{k+1}-\binom{d-m}{k+1}-\sum_{i=d-m}^{d-m+s-2}\binom{i}{k}+\sum_{i=d-s+1}^{d-1}\binom{i}{k} \\
               & \ge \binom{d+1}{k+1}+2\binom{d}{k+1}-\binom{d-m}{k+1},
           \end{aligned}
       \end{equation}
    where the last equality holds if and only if $s=1$, i.e. the partition of one side of the vertices forms a simplex facet, and there are exactly $d$ edges in between the partition of the vertices. One condition for the first equality to hold is that $P_2$ is the minimizer with $d+m+1$ vertices, which is $M(m+1,d-m-1)$. Hence, $M(m+1,d-m-1)^{\text{cut}}$ is the unique minimizer in this case. \\
    The remaining case is that $P_1$, $P_2$ are both pyramids, and $x_1$, $x_2$ are apexes, respectively. It follows that $\text{conv}(V_1)$, $\text{conv}(V_2)$ are facets of $P$ and $V_1$, $V_2$ have the same number of vertices. We may assume that $P$ has $2d+2m$ vertices, $1 \le m \le (d-4)/2$. Then 
    \begin{equation}
        \begin{aligned}
            f_k(P) & \ge 2\phi_k(d+m, d-1)+\phi_{k-1}(d+m, d-1) \\
            & = 2\binom{d}{k+1}+2\binom{d-1}{k+1}-2\binom{d-m-1}{k+1}+\binom{d}{k}+\binom{d-1}{k}-\binom{d-m-1}{k} \\
            & =\binom{d+1}{k+1}+2\binom{d}{k+1}-\binom{d-2m}{k+1}+\binom{d-2m}{k+1}+\binom{d-1}{k+1}-2\binom{d-m-1}{k+1}-\binom{d-m-1}{k} \\
            & = \binom{d+1}{k+1}+2\binom{d}{k+1}-\binom{d-2m}{k+1}+\sum_{i=0}^{d-2m-1}\binom{i}{k}-2\sum_{i=0}^{d-m-2}\binom{i}{k}+\sum_{i=0}^{d-2}\binom{i}{k}-\binom{d-m-1}{k}  \\
            & =\binom{d+1}{k+1}+2\binom{d}{k+1}-\binom{d-2m}{k+1}-\sum_{i=d-2m}^{d-m-2}\binom{i}{k}+\sum_{i=d-m-1}^{d-2}\binom{i}{k}-\binom{d-m-1}{k} \\
            & \ge \binom{d+1}{k+1}+2\binom{d}{k+1}-\binom{d-2m}{k+1},
        \end{aligned}
    \end{equation}
    where the last equality holds if and only if $m=1$, and the first equality holds if and only if $\text{conv}(V_1)$, $\text{conv}(V_2)$ are both $M(m+1, d-m-2)$, i.e. the unique minimizer for this case is the prism based on  $M(2, d-3)$.
    \end{proof}
\end{theorem}


We expect that the $d$-polytopes with between $2d+2$ and $3d-6$ vertices and minimum number of $k$-faces must be decomposable. However for $d>7$, a pyramid over $\Delta(2,d-3)$ is indecomposable and has strictly fewer edges than any decomposable polytope with $3d-5$ vertices \cite[Section 3]{2dplus2}.


\section{Lower bounds for the number of facets}

The results of Section 2 show that there are conditionally decomposable polytopes with $d+5$ facets, for every $d\ge3$. Now we investigate whether this bound can be reduced.

The following result is essentially contained in the second paragraph of \cite[p 88]{McMullen1973}, and formulated explicitly in \cite[Theorem 5.1]{Castillo-et-al}.
For a historical account of the evolution of this result and more profound details, see \cite[Theorem 5.1]{Castillo-et-al}, also \cite{FillastreIzmestiev2017}, or the chamber cone discussed in \cite{triangulations}.

\begin{theorem}\label{Gale-polar} (McMullen) 
Let $P$ be a polytope, $P^{\circ}$ the polar of $P$, and $Gale(P^{\circ})$ the Gale diagram of the polar of $P$. Then $P$ is decomposable if and only if $\dim(\bigcap S_i)>0$, and $P$ is indecomposable if and only if $\bigcap S_i$ is the origin, where $S_i$ ranges over all the cofacets of $P^{\circ}$. 
\end{theorem}

A $d$-polytope with $d+2$ facets is either simple or a pyramid, so there are no conditionally decomposable examples. We show that the same is true for $d$-polytopes with $d+3$ facets, based on the fact that the colinear/coplanar property in a Gale diagram is a combinatorial property.

\begin{proposition}
 There is no conditionally decomposable $d$-polytope with $d+3$ facets.
 \begin{proof}
    Let $P$ be a $d$-polytope with $d+3$ facets. Correspondingly,  $P^{\circ}$ is a $d$-polytope with $d+3$ vertices, whose Gale diagram is in $\mathbb{R}^2$ with $d+3$ points. Any cofacet in this Gale diagram is either a single point (if $P$ is a pyramid), two diametrically opposite points, or three points whose convex hull contains a neighborhood of the origin. The intersection of all such triangular cofacets will also contain a neighborhood of the origin, and thus have no essential effect on the intersection described in \cref{Gale-polar}. Thus it suffices to consider the cofacets of $Gale(P^{\circ})$ which are 1-dimensional. 
    
    If there are two different diameters, the intersection of these diameters will be the origin, then the intersection of all cofacets is the origin. And for any polytopes combinatorially equivalent to it, the two different diameters cannot coincide, so the intersection of all cofacets is the origin which is a combinatorial property, i.e. $P$ is combinatorially indecomposable. And if there are only identical diameters, those points remains colinear with respect to combinatorial equivalence, hence $P$ is combinatorially decomposable.
 \end{proof}
\end{proposition}

\begin{proposition}
    There is no conditionally decomposable 4-polytope with 8 facets.
    \begin{proof}
        The diagram $Gale(P^{\circ})$ is in $\mathbb{R}^3$, so it suffices to consider cofacets of dimension 1 or 2, i.e. cofacets that are line segments or triangles.

        Suppose there is a cofacet $F_1$ which is a diameter; then it has transforms of vertices at both end points. For the other cofacets which are line segments, if they are identical to $F_1$, they remains identical with respect to combinatorial equivalence, so they do not have any effect on decomposability. If they are different from $F_1$, then $P$ is combinatorially indecomposable. For the cofacets which are triangles, if they are coplanar with $F_1$, they remain coplanar; and if they are not coplanar, they remain not coplanar, with respect to combinatorial equivalence, i.e. if there is a triangular cofacet not coplanar with $F_1$, then $P$ is combinatorially indecomposable, and if all the triangular cofacets are coplanar with $F_1$, then $P$ is combinatorially decomposable.

        Now suppose that all the lower dimensional cofacets are triangles. Either the Gale diagram has two or fewer triangular cofacets, implying that their intersection is an interval whence $P$ is combinatorially decomposable, or it has at least 3 triangular cofacets. For the latter case, since there are exactly 8 points in the Gale diagram, some of the triangular cofacets have to share a common vertex, $v\in F_i \cap F_j$. If there is triangular cofacet $F$ that is not coplanar with $v$, then $F\cap F_i\cap F_j=\{0\}$, and then $P$ is combinatorially indecomposable. If all of the triangular cofacets are coplanar with $v$, then the intersection of all the cofacets contains $[0, v]$, and then $P$ is combinatorially decomposable.        
    \end{proof}
\end{proposition}

\begin{proposition}
 For all $d\ge5$, there is a conditionally decomposable $d$-polytope with $d+4$ facets.
 \begin{proof}
 It suffices to prove this when $d=5$; constructing wedges then establishes the conclusion in higher dimensions.

 In $\r^3$, it is not hard to find three triangles with vertices on the unit sphere, one each in the planes $z=0$, $x=0$ and $x=z$, each with the origin in its relative interior, such that no other triple of vertices has the origin in its relative interior, no two vertices are antipodal or coincident, and every open half-sphere contains at least two of them.
 Then we have 9 points which constitute the Gale diagram of a 5-polytope with 9 vertices. Its polar, which we will call $P$, is a 5-polytope with 9 facets. The three planes contain the $y$-axis, so \cref{Gale-polar}  
 ensures that $P$ is  decomposable. Indeed there are no other lower dimensional cofacets, so the intersection of all the cofacets is just the intersection of the three  triangular cofacets, which contains a segment in the $y$-axis.
 
 However if we tilt one the triangles slightly, the combinatorial structure of $P$ will not change, but the three planes, and hence the three triangles, will intersect only at the origin. This $P$ is conditionally decomposable.
 
\end{proof}
\end{proposition}

\section*{Acknowledgments} 
We would like to express our gratitude to an  anonymous referee for pointing out the structure of our first examples, i.e. that they are wedges, and for indicating in detail possible future lines of research.

%
%



Federation University Australia, P.O. Box 663, Ballarat, Victoria 3353, Australia\\\email{wangjiebulingbuling@foxmail.com} \and \email{d.yost@federation.edu.au}
\end{document}